\def\be{\begin{equation}}
\def\en{\end{equation}}
\definecolor{darkgreen}{rgb}{.1,.6,0}
\newcommand{\R}{\mathbb{R}}
\newcommand{\Z}{\mathbb{Z}}
\newtheorem{theorem}{Theorem}[section] 
\newtheorem{lemma}[theorem]{Lemma}     
\newtheorem{corollary}[theorem]{Corollary}
\newtheorem{proposition}[theorem]{Proposition}
\newtheorem{assumption}[theorem]{Assumption}
\theoremstyle{definition}
\newtheorem{definition}[theorem]{Definition}
\newtheorem{example}[theorem]{Example}
\newtheorem*{ack*}{Acknowledgment}
\theoremstyle{remark}
\newtheorem{remark}[theorem]{Remark}
\numberwithin{equation}{section}
\title[Asymptotic pressure on some self-similar trees]{{Asymptotic pressure on some self-similar trees}}
\author{Karl Petersen}
\address{Department of Mathematics,
	CB 3250 Phillips Hall,
	University of North Carolina,
	Chapel Hill, NC 27599 USA}
\email{petersen@math.unc.edu}
\author{Ibrahim Salama}
\address{School of Business, North Carolina Central University, Durham, NC 27707 USA}
\email{isalama@nccu.edu}
\date{\today}
\begin{document}
	
		\subjclass[2020]{37B10, 37B40, 37B51, 82B20}
	\keywords{Tree shift, shift of finite type, entropy, topological pressure, hard square model}
	
	\begin{abstract}
		The vertices of the Cayley graph of a finitely generated semigroup form a set of sites which can be labeled by elements of a finite alphabet in a manner governed by a nonnegative real {\em interaction matrix}, respecting nearest neighbor adjacency restrictions. 
				To the set of these configurations one can associate a {\em pressure}, which is defined as the limit, when it exists, of averages of the logarithm of the partition function over certain finite subgraphs. 
		We prove that for shifts of finite type on generalized Fibonacci trees {and many primitive interaction matrices}, the limit exists and is given by an infinite series.
		We also show that the limit of any cluster points of the pressure on finite subtrees as the number of generators grows without bound, which we call the {\em asymptotic pressure}, equals the logarithm of the maximum row sum of the interaction matrix. 
	\end{abstract}
	\maketitle  
	
	\section{Introduction}\label{sec:intro}
	Entropy and its generalization pressure are basic concepts imported from statistical physics into information theory and dynamical systems. 
	Usually the state space has been the set of labelings by elements of a finite alphabet of the vertices of the integer lattice $\Z^d$, which then acts on this set of configurations by shifts in the coordinate directions. 
	There has also been interest in such thermodynamical models related to labelings of other graphs, including trees and Cayley graphs of other groups and semigroups, and even arbitrary countable sets, see for example 
	 \cites{BergerYe1990,BergerYe1998,BP1994,BPS1995,BW2002,BW2004,Eggarter1974,
	 	MairesseMarcovici2017,MHZ1974,Preston1974,Runnels1967,Spitzer1975}.
 	 {Burton, Pfister, and Steif \cite{BPS1995}
 		gave an infinite series formula for the pressure on the free group or semigroup on two generators and showed that
 		the variational principle for pressure can fail on trees; 
 		in fact there is an equilibrium state, which is a Gibbs state, if and only if the interaction matrix has constant row sums.} 
 	(Although they treated the case of the free group on two generators and the full $2$-tree, they mentioned that the results would extend to all homogeneous $k$-trees with $k \geq 3$.)
	Piantadosi \cites{PiantadosiThesis,Piantadosi2008} and the current authors \cites{PS2018,PS2019},
	  unaware of the 
	 work of Burton, Pfister, and Steif \cite{BPS1995},
	obtained infinite series formulas for the topological entropy of the golden mean (also called ``hard core" or ``hard square") shift of finite type (SFT) on the free group and free semigroup on a finite number $k$ of generators.
	 The current authors \cites{PS2018,PS2019} also proved the existence of topological entropy for {subshifts} on trees and, for the golden mean subshift, convergence to it by strip approximations analogous to those of Marcus and Pavlov  \cites{Pavlov2012,MarcusPavlov2013}.

	 	 	Louidor, Marcus, and Pavlov \cite{LouidorMarcusPavlov2013} and 
	 	 Meyerovitch and Pavlov \cite{MeyerovitchPavlov2014} studied what they called {\em limiting entropy} and {\em independence entropy}, proving that they coincide for $\Z$ subshifts.
	Based on numerical evidence, Piantadosi conjectured \cite{PiantadosiThesis}*{Conjecture 4.1} that  
	the topological entropy $h^{(k)}$ of the golden mean SFT on the free group on $k$ generators has limiting value $\log 2$ as $k \to \infty$ (see also \cite{Piantadosi2008}). 
	The current authors \cite{PS2018} (see also \cite{PS2019}) proved that the topological entropy of a tree shift on the regular $k$-tree defined by an irreducible $d \times d$ $0,1$ matrix with maximum row sum $s$ has limit $\log s$ as $k \to \infty$, {giving a positive answer to Piantadosi's conjecture.}
	 We will call this limit the {\em asymptotic entropy} 
	rather than ``limiting entropy", 
	because {when they exist} entropy and pressure for such systems are already limits or cluster points of functions of sums over finite configurations as the size grows without bound.
	
Here we extend Piantadosi's observations and our previous result in three ways:
the set of sites is allowed to be a tree with branching restrictions, as considered by Ban et al. \cites{BC4,BC5,BC6,BC8,BC7}, equivalently the Cayley graph of a finitely generated semigroup with certain relations;
the labeling is governed by general nearest neighbor shift of finite type (SFT) restrictions, beyond the golden mean model;
and beyond entropy we consider asymptotic {\em pressure}.
In Section \ref{sec:existence} we follow the plan of \cite{BPS1995} to prove the existence of pressure 
\be
P^{(k)}=\lim_{n \to \infty} P_n^{(k)}
\en
 on any generalized Fibonacci $k$-tree {for many primitive interaction matrices} and give an infinite series formula for it (Theorem \ref{th:pressureexists}). 
 
In Section \ref{sec:aspres} we show that for a fixed interaction matrix, a fairly general sequence of restricted trees on $k$ generators, and any cluster points $Q^{(k)}$ in $n$ of $P_n^{(k)}$, the {\em asymptotic pressure} $\lim_{k \to \infty} Q^{(k)}$ of the sequence is the logarithm of the maximum row sum of the interaction matrix.	
{The reason for this seems to be the increasing relative importance of the contribution to the pressure by edges at the boundary of a subtree as the valence (dimension) grows,
and that it is possible to find many valid configurations which label the next-to-last row of the subtree  with a symbol whose corresponding row in the interaction matrix achieves the maximal row sum (see Lemma \ref{lem:sizes} and Theorem \ref{thm:limit}). }
	
	\section{Setup}\label{sec:setup}
	
	We consider entropy and pressure for sets of configurations on a set of sites. 
	There will be two adjacency matrices involved: a primitive (or more generally irreducible) $0,1$ matrix $R$ that is used to {\em form} the tree by restricting the ways that it can branch, and a nonnegative $d \times d$ matrix $A$ that will restrict the ways that the tree can be {\em labeled} by elements of an alphabet of $d$ symbols.. 
	In statistical physics often the set of sites is the $k$-dimensional integer lattice $\Z ^k$. 
	In previous work \cites{PS2018,PS2019} we have studied entropy on the rooted $k$-tree, which is the Cayley graph of the free semigroup on $k$ generators $g_1,\dots ,g_k$. 
	Each string $u$ on the alphabet $\{g_1, \dots, g_k\}$ corresponds to an element of the semigroup and a vertex of the graph, with the empty string $\epsilon$ corresponding to the root. 
	For each string $u$ and $i=1,\dots,k$ there is a directed edge from $u$ to $u g_i$.
	
	Here we consider a general setting, considered previously by Ban et al.   \cites{BC4,BC5,BC6,BC8,BC7} of configurations on the subtree determined by a $k \times k$ $0,1$ {\em restriction matrix} $R$:
		\be\label{eq:subtree}
	S=	S(k,R)=\{0\} \cup \{g_{i_1} \dots g_{i_n}: n \geq 1, \text{ no } R_{{i_j},{i_{j+1}}}=0, j=1, \dots,
			n-1\}.
			\en
		The subtree $S$ can be regarded as the Cayley graph of the semigroup with right absorbing element $0$ 
		($g0=0$ for all $g \in S$) 
		 generated by $\{g_1,\dots ,g_k\}$ and with relations $g_i g_j =0$ if $R_{i j}=0$. 
		Vertices of the graph are identified with elements of the semigroup and with strings on the generators {that do not reduce to $0$}, and the
		 absorbing element corresponds to the root of the tree.
		Alternatively, $S$ can be regarded as the semigroup generated by $\{ g_1, \dots , g_k\}	$ with relations $g_i g_j =g_i$ if $R_{i j}=0$ and identity element $\epsilon$ at the root, cf. \cites{BC4,BC5,BC6,BC8,BC7}.

	We assume at first that each restriction matrix $R$ is {\em primitive}, meaning that some positive power has all entries positive.
	\begin{example}\label{ex:restriction matrices}
  For $0 \leq r<k$ the matrix $R(k,r)$ defined by $R(k,r)_{ij}=0$ if and only if each of $i,j > k-r$, otherwise $R(k,r)_{ij}=1$, defines a ``generalized Fibonacci tree".
  (If $r=0$, so that $R_{ij}=1$ for all $i,j$, $S$ is the free semigroup on $k$ generators and its Cayley graph is the full $k$-tree.)
\end{example}

	We define the {\em height} $|g|$ of an element $g \in S$ to be the length of the {shortest} word that represents it (so that the root has height $0$). 
	We will also use $|.|$ to denote the cardinality of a set as well as the sum of the absolute values of all entries of a matrix or vector.
	
		For each $n \geq 0$, for readability suppressing $k$, we define 
		\be
		\Delta_n=\{g \in S: |g| \leq n\} \quad\text{ and } \quad L_n=\{g \in S: |g|=n\}.
		\en
	Then $|L_1|=k$ and, for $n \geq 2$, 
	\be
	|L_n|= |R^{n-1}|=\sum_{i,j} (R^{n-1})_{ij}.
		\en
		(This holds also for $n=1$, with $R^0=$ the $k \times k$ identity matrix.)
		Further,
		\be
		|\Delta_n|=1+\sum_{j=1}^n |R^{j-1}|.
		\en
	
	A {\em configuration} on $S$ is a labeling of 
the sites (vertices of the subtree) by elements of a finite alphabet $D=\{ 1,2, \dots, d\}$, thus an element $x \in X=D^S$. 
  We will be interested in labelings that are allowed by {\em nearest neighbor} constraints.

	Let $A=(a_{ij})$ be a nonnegative real $d \times d$ matrix that we take as specifying {\em pair interactions} and let $w=(w_j)$ be a positive real vector that we take as specifying {\em site energies}. 
	By discarding irrelevant states we may assume that $A$ is {\em nondegenerate} in the sense that no row nor column is identically zero. 
	The idea is that if a vertex $h$ is assigned label $j$ by a configuration $x \in X$, then the ``particle" $j$ at site $h$ is given an {energy $\log w(j)$ by some ambient field}.
	And if vertices $g$, with label $i$, and $h$, {with label $j$}, are joined by an {\em edge} (so that $h=gg_s$ for some $s=1,\dots ,k$), then they experience an interaction (tension, attraction, or repulsion) determined by $a_{ij}$.
	The {\em interaction matrix}
	\be
	E(i,j)=a_{ij}w_j
	\en
	collects these effects. 
	(For improved readability we will sometimes denote matrix or vector indices parenthetically rather than by subscripts.)

	In relation to the setup in \cite{BPS1995}, 
	\be
	a_{ij}=e^{\phi(i,j)}, \quad w_j=e^{\chi(j)};
	\en
	but note that we allow $\phi(i,j)=-\infty$, while $\chi(j) \in \R$. 

	 Now we restrict our attention to labelings that conform to
	the  adjacency restrictions provided by the matrix $A$.
	Thus our set of configurations will be the {(hom---see \cite{CM2018}) {\em tree shift of finite type} $X_A$ determined by $R$ and $A$ contained in the product space
	$X=D^S$:
	\be\label{eq:transitions}
	\begin{gathered}
	X_A=\{x \in D^S: A(x(g),x(h))>0 \text{ for all } g,h\in S \text{ such that }\\
	 h=gg_s \text{ for some } s=1,\dots ,k\}.
	 \end{gathered}
	\en
	(We ignore every configuration that has an interaction of size $0$ between some pair of adjacent sites.) 
	{Because we have assumed that $A$ is nondegenerate, $X_A$ is nonempty.}
	
	$X$ (as well as $X_A$) is a compact metric space with distance $d(x,y)=1/(n+1)$ if $n=\max\{j:x=y \text{ on } \Delta_j\}$.
	The semigroup $S$ acts continuously on $X$ according to
	\be
	(x g)(h)=x(gh).
	\en
	
{We are interested in (allegedly) physical quantities due to configurations $x \in D^{\Delta_n}$ on finite subtrees $\Delta_n$ that are the restrictions of configurations in $X_A$, and then their limits as $n \to \infty$.
		Define $X_A^{(n)}$ to be the set of labelings of $\Delta_n$ that are restrictions to $\Delta_n$ of some element of $X_A$. 
	The contribution to the pressure from configurations on $\Delta_n$ that have symbol $i \in D$ at the root is 
	\be
	Z_n(i)=w(i) \sum_{\substack{x \in X_A^{(n)}\\x(\epsilon)=i}} \prod_{\substack{\gamma=\left< g,h\right> \\ \text{ edge in } \Delta_n}} E(x(g),x(h)).
	\en
	We continue to suppress $k$ much of the time when it is fixed to avoid excess notation.
	The $n$'th {\em partition vector} is
	\be
	Z_n=(Z_n(1),\dots,Z_n(d)), 
	\en
	the {\em partition function on $\Delta_n$} is
	\be
	|Z_n|=Z_n(1)+ \dots + Z_n(d),
	\en
	and the $n$'th {\em pressure probability vector} is $\rho_n$ defined by
	\be
	\rho_n(i)=\frac{Z_n(i)}{|Z_n|}, \quad i=1, \dots, d.
	\en
	The {\em pressure} or {\em free energy} on $\Delta_n$ is
	\be
	P_n^{(k)}=\frac{\log |Z_n|}{|\Delta_n|}.
	\en
		The {\em upper limiting pressure} $\bar P^{(k)}$ and {\em lower limiting pressure} $\underline P^{(k)}$ are defined by
		\be\label{eq:pressures}
		\bar P^{(k)}=\limsup_{n \to \infty} \frac{\log |Z_n|}{|\Delta_n|}, \quad \underline P^{(k)}=\liminf_{n \to \infty} \frac{\log |Z_n|}{|\Delta_n|}.
				\en
			}
					
		{When $A$ is a $0,1$ matrix and $w_j=1$ for all $j$, 
		$|Z_n|$ is the number of configurations on $\Delta_n$ that are allowed by the adjacency matrix $A$ (symbols $i,j \in D$ are allowed at vertices connected by an edge if and only if $a_{ij}=1$). 
			In this case $\lim_{n \to \infty}P_n^{(k)}$, if it exists, is the {\em topological entropy} of the tree shift of finite type determined by $A$ on the restricted tree.
		The limit has been proved to exist for any tree shift (not necessarily finite type) on a full $k$-tree \cites{PS2018,PS2019}. 
		Ban et al. \cites{BC4,BC5,BC6,BC8,BC7} proved existence of the topological entropy for shifts of finite type on classes of restricted trees, including the generalized Fibonacci trees of Example \ref{ex:restriction matrices}. 
	In the following section we extend these results to pressure on such trees.}

\section{Existence of pressure on generalized Fibonacci trees and an infinite series formula for it}\label{sec:existence}
	{{
		To formulate a recursion formula for the partition function in somewhat compressed notation,} we use unusual (Hadamard or Schur) coordinatewise products and powers for (usually column) vectors $v=(v_i)$ and matrices $A=(a_{ij})$: 
	\be
	(v \times w)_i=v_iw_i, \quad (A^{[n]})_{ij}=a_{ij}^n.
	\en
	In this section $A$ is a nonnegative $d \times d$ primitive matrix.	
}

	{Consider first the case of the full $k$-tree ($r=0$), with symbol $i \in D$ at the root. 
	If $n \geq 2$, then each node of the first row $L_1$ of $\Delta_1$ can be thought of as the root of a subtree $\Delta_{n-1}$ of height $n-1$ and can be assigned any symbol from $D$ that can follow $i$ according to the transitions allowed by $A$. 
	The labelings of these subtrees are independent of one another, so one may interchange sum and product as in the following.
		\be
		\begin{aligned}
	Z_n(i)&=w(i) \sum_{\substack{x \in X_A^{(n)}\\x(\epsilon)=i}} \prod_{\substack{\gamma=\left< g,h\right> \\ \text{ edge in } \Delta_n}} E(x(g),x(h))\\
	&=	
	w(i) \sum_{\substack{x \in X_A^{(n)}\\x(\epsilon)=i}} \prod_{\substack{\gamma=\left< g,h\right> \\ \text{ edge in } \Delta_n}} A(x(g),x(h)) w(x(h))\\
	&=w(i) \left[ \sum_{j=1}^d A(i,j) \left(w(j)  \sum_{\substack{x \in X_A^{(n-1)}\\x(\epsilon)=j}} \prod_{\substack{\gamma=\left< g,h\right> \\ \text{ edge in } \Delta_{n-1}}} E(x(g),x(h))\right) \right] ^k \\
	&=w(i) \left[ \sum_{j=1}^d A(i,j) Z_{n-1}(j)\right] ^k
	=\left( w \times \left[AZ_{n-1}\right] ^{[k]}\right)(i).
	\end{aligned}
	\en
}
	
{Now consider a {\em generalized Fibonacci} tree defined by a restriction matrix $R=R(k,r), k \geq 2, 0 \leq r<k$, with $R_{ij}=0$ if each of $i,j>k-r$, otherwise $R_{ij}=1$. 
	On row $L_1$ of $\Delta_n$ there are now $k-r$ ``free" vertices, each the root of a subtree $\Delta_{n-1}$ of height $n-1$, along with $r$ ``restricted" vertices, each followed by $k-r$ edges whose terminal vertices  are then the roots of subtrees $\Delta_{n-2}$ of height $n-2$. 
	These subtrees are labeled independently. 
	Letting
	\be
	Z_0=w, \quad Z_1=w \times (AZ_0)^{[k]},
	\en
	and looking at $\Delta_n$ and its first few rows, we thus find that for $n \geq 1$,}
	\be
	\begin{aligned}
		Z_{n}(i)&=w(i) \left( \sum_jA(i,j)Z_{n-1}(j)\right)^{[k-r]} \left[\sum_m A(i,m)w(m)\left(\sum_u A(m,u)Z_{n-2}(u)\right)^{[k-r]}\right]^{[r]}\\
		&=w(i) \left(\left( AZ_{n-1}\right)(i)\right)^{[k-r]} \left[ \left(A \left( w \times (AZ_{n-2})^{[k-r]}\right)\right)(i)\right]^{[r]},
	\end{aligned}
	\en
	so that 
	\be\label{eq:iter}
	\begin{aligned}
		|Z_{n}|&= \left| w \times (AZ_{n-1})^{[k-r]} \times \left[ A  \left( w \times (AZ_{n-2})^{[k-r]}\right)\right]^{[r]}\right|
		&=  |Z_{n-1}|^{[k-r]} |Z_{n-2}|^{[r(k-r)]} g(n),
	\end{aligned}
	\en
	with
	\be\label{eq:gspecial}
	{g(n)=\left|w \times (A\rho_{n-1})^{[k-r]} \times \left[ A  \left( w \times (A\rho_{n-2})^{[k-r]}\right)\right]^{[r]}\right|.}
	\en
	
	{More generally, for $d$-dimensional probability vectors $\alpha,\beta$ define
	\be
	g(\alpha,\beta)=\left|w \times (A\alpha)^{[k-r]} \times \left[ A  \left( w \times (A\beta)^{[k-r]}\right)\right]^{[r]}\right|,
	\en
	and on the set of pairs of such vectors for which $g(\alpha,\beta)>0$ define
	\be
	T(\alpha,\beta)=\left( w \times (A\alpha)^{[k-r]} \times \left[ A  \left( w \times (A\beta)^{[k-r]}\right)\right]^{[r]} /g(\alpha,\beta),\alpha\right).
	\en
	\begin{remark}\label{rem:g}
		{The function $g(n)$ is bounded above; but it might take values less than $1$, and this could cause problems for convergence of the infinite series for pressure (\ref{eq:series1}).}
		\begin{enumerate}
			\item			In the case of a full $k$-tree ($r=0$), $\log g$ is bounded because  
				$w>0$ and every column of $A$ has a positive entry. 
				(To see this, note that $\rho_{n-1}$ is a probability vector with positive entries, at least one of which, say with index $j$, is at least $1/d$. 
				We may choose $i$ such that $a_{ij}\geq a=\min\{a_{ij}:a_{ij}>0\}>0$.
				Then $(A \rho_{n-1})(i)\geq a/d$ and $g(n)=|w \times (A \rho_{n-1})^{[k]}| \geq (\inf_i w_i) (a/d)^k$.)
				\item For any generalized Fibonacci tree, if $A$ has a positive row then $\log g$ is bounded. 
				(The statement is clear if for some $i$ all $a_{ij}=1$, hence also if all $a_{ij}=\delta$ for some $\delta>0$, hence also if all $a_{ij}\geq \delta$ for some $\delta>0$.) 
				\item If $T^n (\rho_1,\rho_0)$ converges to a fixed point or periodic orbit, then $\log g$ is bounded. 
				\item
				If $g(\alpha,\beta)>0$ on the closure of the orbit of $(\rho_1,\rho_0)$ under $T$, then it is bounded below by a positive constant and hence again $\log g$ is bounded.
				\end{enumerate}
				\end{remark} }
								{We show now that if $A$ satisfies a condition apparently stronger than primitive, then $\log g$ is bounded.}
				
	Suppose for the moment that we are dealing with a general restricted tree determined by a $k \times k$ restriction matrix $R$, as in (\ref{eq:subtree}), and not just a generalized Fibonacci tree. 
	For a fixed $p>0$, the subtrees rooted at the vertices $v \in L_p$ have different types, depending on the directions $e(v) \in \{1, \dots, k\}$ of their incoming edges to their root vertices $v$: only initial edges in directions $h$ for which $R(e(v),h)=1$ are allowed.
	For $n>p$ denote by 
	$X_A^{(n-p,v)}$ the set of restrictions of configurations in $X_A^{(n)}$ to the restricted subtree $\Delta_{n-p}^{(v)}$ of $\Delta_n$ that has root at $v$, and by	
	\be
		Z_{n-p}^{(v)}(j)= \sum_{\substack{x \in X_A^{(n-p,v)}\\x(\epsilon)=j}} \prod_{\substack{\gamma=\left< g,h\right> \\ \text{ edge in } \Delta_{n-p}^{(v)}}} E(x(g),x(h))
		\en
		the pressure on the subtree $\Delta_{n-p}^{(v)}$ of $\Delta_n$ that has label $j$ at its root, $v$. 
		
		Now consider the set $L_p(i)$ of strings of symbols $J=(j(v), v \in L_p)$ that occur on $L_p$ in configurations $x \in X_A^{(n)}$ on $\Delta_n$ that have symbol $i$ at the root: $x(\epsilon)=i, x(v)=j(v)$ for $v \in L_p$. 
		We are interested in matrices $A$ that guarantee that for some $p$ we have $L_p(i)=L_p(j)$ for all $i,j \in \{1, \dots, d\}$.
		\begin{definition}\label{def:primitive}
			Let $d,k \geq 1$ and let $A$ be a nondegenerate irreducible $d \times d$ nonnegative matrix that determines allowed transitions on trees, as in Equation \ref{eq:transitions}. 
			If there is an $n \geq 1$ such that on a tree $t$ with labelings allowed by $A$ we have $L_n(i)=L_n(j)$ for all $i,j \in D=\{1, \dots, d\}$, we will say that $A$ is {\em $n$-primitive on $t$}. 
			We denote the set of $n$-primitive $d \times d$ matrices on the full $k$-tree by $P(k,n)$.\\
			\indent
			{If there is an $n \geq 1$ such that for all $i \in D$ we have $L_n(i)=D^{|L_n|}$, we will say that $A$ is \em{$n^*$-primitive on $t$}.
					We denote the set of $n^*$-primitive matrices on the full $k$-tree by $P^*(k,n)$.}
			\end{definition}
	{	\begin{remark}\label{rem:primitive}
			{The following observations are included to provide some familiarity with the condition that $A \in P(k,n)$.}
			\begin{enumerate}
				\item {For all $k,n$ we have $P^*(k,n) \subset P(k,n)$.}
				\item If $A \in P(k,n)$ for some $k \geq 1$, then $A$ is primitive, and in fact $A^n>0$.
				\item If $A \in P(k,n)$, then $A$ is $n$-primitive on every subtree of the full $k$-tree.
				\item $P(k+1,n) \subset P(k,n)$.
				\item $P(k,n) \subset P(k,n+1)$.
				\item The $3 \times 3$ matrix $A_2=(110,101,101)$ is in $P(2,2) \setminus P(2,1)$.
				\item $A_3=(110,101,100) \in P(2,3) \setminus P(2,2)$.
				\item If $A^n>0$ and $A$ has a positive row, then $A\in P(k,n+1)$.
				\end{enumerate}
				\end{remark} }
				
		We return now to a general restricted tree and strings $J=(j(v), v \in L_p) \in L_p(i)$ on row $p$ of $\Delta_n$ in the tree. 
		For such a string $J$ define
		\be
	{F_n(i,J)} = \prod_{v} Z_{n-p}^{(v)} (j(v)), 
		\en
		{to be the sum over all configurations on $\cup_{v \in L_p} \Delta_{n-p}$ that have $J$ on $L_p$ of the products of the edge weights (the product of sums is a sum of products---see (\ref{eq:switch}) below)},
		and let
		\be
	{M_n(i)=\max_{\text{allowable } J} F_n(i,J).}
		\en
		Choose one allowable string $J_0=(j_0(v), v \in L_p)$ that achieves the maximum value $M_n$ of {$F_n(i,J)$ over all $i$ and $J$.}
		
	\begin{remark}\label{rem:same}	{If $A \in P(k,p)$ for some $p$, then 
 			{for all choices of symbol $i$ at the root and all $n \geq p$,
				we have $J_0 \in L_p(i)$ and $M_n(i)=M_n$ .}}
				\end{remark}
	
	\begin{proposition}
		Suppose that $p \geq 1$ and $A \in P(k,p)$, so that $A^p>0$. 
		For a general restricted tree, 
				there are {positive} constants $\eta_p$ and $\xi_p$ such that for all $n>p$ there is $M_n>0$ such that for each $i=1,\dots,d$,
		\be\label{eq:ineq}
		\xi_p M_n \leq Z_n(i) \leq \eta_p M_n.
		\en
	\end{proposition}
	\begin{proof}
						{We form some special configurations on $\Delta_n$.
				Given $i \in \{1, \dots, d\}$,
				{because $J_0 \in L_p(i)$ we can }
				choose a labeling $x \in X_A^{(p)}$ that has $i$ at the root and assigns labels $j_0(v)$ to the vertices $v \in L_p$ 
				{(see Remark \ref{rem:same})}. 
				Let 
		\be
		\xi_p(i,x)= w(i) \prod_{\substack{\gamma=\left< g,h\right> \\ \text{ edge in } \Delta_p}} E(x(g),x(h))
		\en
		denote the product of $w(i)$ and the weights determined by $x$ on the edges in $\Delta_p$,
		 and let
		 \be
		\xi_p= \inf \{\xi_p(i,x): x \in X_A^{(p)}, x(\epsilon)=i, i=1,\dots,d\}.
	\en 
Now fix $i \in \{ 1, \dots, d\}$ and choose any $x \in X_A^{(p)}$ with $x(\epsilon)=i$ and $x(v)=j_0(v)$ for all $v \in L_p$.}
	{We label $\Delta_p$ by $x$, and
	then for each $\Delta_{n-p}$ with a vertex on $L_p$ we assign label $j_0(v)$ to its root $v$ and any labeling allowed by $A$ to the rest of its vertices. These labelings of the $\Delta_{n-p}$ are assigned independently of each other.}
	
		{The sum over {\em all} configurations on $\Delta_n$ with $i$ at the root of the products of the weights on the edges in $\Delta_n$ is greater than or equal to the sum over just these special configurations
			(that have $x$ on $\Delta_p$ and $J_0$ on $L_p$),
			so {(recall Remark \ref{rem:same})}
				\be
		Z_n(i) \geq \xi_p(i) {F_n(i,J_0)} \geq \xi_p M_n,
		\en 
	proving the left-hand inequality.}
		
			For the right-hand inequality, 
			we decompose each configuration $x\in X_A^{(n)}$ into a configuration $x_1$ on $\Delta_p$ and a configuration $x_2$ on $\cup_{v \in L_p} \Delta_{n-p}^{(v)}$. 
			These configurations are not independent, since they overlap in $L_p$; this causes the first inequality below.			
			Denote by $\eta_p$ {the sum}, over all allowed configurations $x$ on $\Delta_p$, of the {product of $w$ at the root and the weights} determined by $x$ on the edges in $\Delta_p$,
			multiplied by {$d$}:
			\be
			{\eta_p = d \sum_{x \in X_A^{(p)}} {w(x(\epsilon))}\prod_{\substack{\gamma=\left< g,h\right> \\ \text{ edge in } \Delta_p}} E(x(g),x(h)).}
			\en
					Then
					\be
					\begin{aligned}
    Z_n(i)&=w(i)\sum_{\substack{x\in X_A^{(n)}\\{x(\epsilon)=i}}} \prod_{\substack{\gamma=\left< g,h\right> \\ \text{ edge in } \Delta_n}} E(x(g),x(h)) \\
    &\leq w(i)  \sum_{\substack{x_1\in X_A^{{(p)}}\\{x_1(\epsilon)=i}}} \sum_{\substack{x_2 \text{ on }\\ \cup_{v \in L_p} \Delta_{n-p}^{(v)}}} \prod_{\substack{\gamma=\left< g,h\right> \\ \text{ edge in } \Delta_n}} E(x(g),x(h)) \\
  &=w(i) \sum_{\substack{x_1\in X_A^{({p})}\\{x_1(\epsilon)=i}}} \prod_{\substack{\gamma=\left< g,h\right> \\ \text{ edge in } \Delta_p}} E(x(g),x(h)) 
    \sum_{\substack{x_2 \text{ on }\\ \cup_{v \in L_p} \Delta_{n-p}^{(v)}}} \prod_{v \in L_p} \prod_{\substack{\gamma=\left< g,h\right> \\ \text{ edge in } \Delta_{n-p}^{(v)}}} E(x(g),x(h))\\
    &\leq \frac{\eta_p}{d}  \sum_{\substack{x_2 \text{ on }\\ \cup_{v \in L_p} \Delta_{n-p}^{(v)}}}  \prod_{v \in L_p} \prod_{\substack{\gamma=\left< g,h\right> \\ \text{ edge in } \Delta_{n-p}^{(v)}}} E(x(g),x(h))
   =\frac{\eta_p}{d}  \prod_{v \in L_p} \sum_{\substack{x_3 \text{ on }\\  \Delta_{n-p}^{(v)} }}    \prod_{\substack{\gamma=\left< g,h\right> \\ \text{ edge in } \Delta_{n-p}^{(v)}}} E(x(g),x(h))\\
    &= \frac{\eta_p}{d}  \prod_{v \in L_p} \sum_{j=1}^d \sum_{\substack{x_3 \text{ on }\\  \Delta_{n-p}^{(v)} \\ x_3(v)=j}}    \prod_{\substack{\gamma=\left< g,h\right> \\ \text{ edge in } \Delta_{n-p}^{(v)}}} E(x(g),x(h))
    = \frac{\eta_p}{d}  \prod_{v \in L_p} \sum_{j=1}^d   Z_{n-p}^{(v)}(j)\\
   &= {\frac{\eta_p}{d}   \sum_{j=1}^d  \prod_{v \in L_p}  Z_{n-p}^{(v)}(j)
    \leq \frac{\eta_p}{d} d \prod_{v \in L_p} Z_{n-p}^{(v)}(j_0(v))
     = \eta_p M_n.}
    \end{aligned}
\en
\end{proof}
The interchange of sum and product in the fourth line of the above calculation 
{(and elsewhere)} takes some thought, but it seems to be correct, 
{since the configurations on each $\Delta_{n-p}^{(v)}$ are assigned independently}. 
The figure with two small trees and nodes labeled by $a,b,c,d,e$ taking values in $\{1, \dots, d\}$ is meant to give the idea: by taking out a common factor we have
\be\label{eq:switch}
\begin{gathered}
\sum_{a,b,c,d,e} E(a,b)E(a,c)E(d,e)=\sum_{a,b,c}\sum_{d,e}E(a,b)E(a,c)E(d,e)\\
=\left(\sum_{d,e} E(d,e)\right) \left(\sum_{a,b,c}E(a,b)E(a,c)\right).
\end{gathered}
\en
		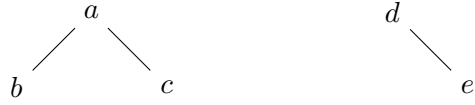
\begin{figure} \label{fig:trees}
		\begin{tikzpicture}[scale=1]
			\node at (-2,0) (a) {$a$};
			\node at (-3,-1) (b) {$b$};
			\node at (-1,-1) (c) {$c$};
			\draw (a) -- (b);
			\draw (a) -- (c);
			
			\node at (2,0) (d) {$d$};
			\node at (3,-1) (e) {$e$};
			\draw (d) -- (e);
		
			\end{tikzpicture}
		\caption{Two small trees}
		\label{fig:ig}
	\end{figure}
	
	\begin{corollary}\label{cor:rat}
		Suppose that $p \geq 1$ and $A \in P(k,p)$.
		Then for a general restricted tree, 
		for each $i,j=1,\dots,d$ and $n>p$,
		\be
		\frac{Z_n(i)}{Z_n(j)}\geq \frac{\xi_p}{\eta_p}.
		\en
	\end{corollary}

For convenience we write $k_1=k-r$ and $k_2=r$. 
	
	\begin{corollary}\label{cor:g}
		For a generalized Fibonacci tree with $A \in P(k,p)$ and 
		\be
		{G_m}=w \times (A \rho_{m-1})^{[k_1]} \times [A\left(w \times (A \rho_{m-2})^{[k_1]}\right)]^{[k_2]},
		\en
		there is a constant $c>0$ such that 
		\be
		g(m)=|G_m| \geq c>0 \quad\text{ for all } m>p+2.
		\en
	\end{corollary}
	\begin{proof}
		By Equation \ref{eq:ineq}, each $\rho_m(i) \geq 1/d|Z_p|$. Let $a_i=\inf_i\sum_ja_{ij}$ and $b=\inf_iw_i$. Then for each $i$,
		\be
		G_m(i) \geq b \left( \frac{a_1 \xi_p}{d\, \eta_p} \right) ^{k_1} \left[ a_1 \left(b \left(\frac{a_1 \xi_p}{d\, \eta_p}\right)\right)^{k_1}\right]^{k_2} >0, 
		\en
		independently of $m$.
	\end{proof}
	
	\begin{remark}
		\begin{enumerate}
			\item The conclusion of Corollary \ref{cor:g} holds also for some matrices $A$ that are not in $P(k,n)$ for any $n$. 
			For example, if $d=3$, {$w(i)=1$ for all $i=1,\dots,d$, and $A_5=(011,101,110)$, then, because $A_5$ has constant row sums, $\rho_n(i)=1/d$ for all $n$ and $i$, and hence $g(n)$ is the same positive constant for all $n$.} 
			{For general positive $w$, we may bound $g(n)$ from below by replacing $w$ by the constant vector $\tilde w(i) = \inf_j w(j)>0$ for all $i$.} 
		\item Maybe the conclusion holds for all primitive matrices $A$.
		 \item Numerical calculations indicate that if $A$ is assumed only to be irreducible, there might not be a positive lower bound for $\rho_n(i)$, yet there is one for $g(n)$.
		 \item 	We do not see how to deduce directly from Equation \ref{eq:ineq} that $g(n) = |Z_n|/(|Z_{n-1}|^{[k-r]}|Z_{n-2}|^{[r(k-r)]})$ is bounded below by a positive constant for all general restricted trees, without using the particular form of the recurrences (\ref{eq:iter}) and (\ref{eq:gspecial}) for $Z_n$ and $\rho_n$ for generalized Fibonacci trees.
			\end{enumerate}
	\end{remark}
 
 We proceed now to establish an infinite series formula for pressure on generalized Fibonacci trees, which also proves existence of the limit that defines it.

	In order to iterate \ref{eq:iter}, let us abbreviate
	\be
	|Z_0|=a,|Z_1|=b,u=k-r,v=ru.
	\en
	Thus 
	\be\begin{aligned}
		|Z_2|&=a^vb^ug(1)\\
		|Z_3|&=a^{uv}b^{u^2+v} g(1)^u g(2)\\
		|Z_4|&= a^{u^2v+v^2}b^{u^3+2uv} g(1)^{u^2+v} g(2)^u g(3)\\
		|Z_5|&=a^{u^3v+2uv^2}b^{u^4+3u^2v+v^2} g(1) ^{u^3+2uv} g(2)^{u^2+v} g(3)^u g(4) \\
		|Z_6|&=a^{u^4v+3u^2v^2+v^3}b^{u^5+4u^3v+3uv^2}g(1)^{u^4+3u^2v+v^2} g(2)^{u^3+2uv} g(3)^{u^2+v} g(4)^u g(5)\\
		&\text{etc.}
	\end{aligned}
	\en
	Let us define $c(n),d(n),e(n)$ for $n \geq 0$ by
		\be\label{eq:Z_n}
	|Z_n|=a^{c(n)} b^{d(n)} g(1)^{e(n-1)} \dots g(n-2)^{e(2)}g(n-1)^{e(1)},
	\en
	starting with $c(1)=0,d(1)=1,e(0)=0,e(1)=1$. Then
	\be\label{eq:cderec}
	\begin{bmatrix}
		c(n)\\
		d(n)
	\end{bmatrix}
	= \begin{bmatrix} 0 & v \\
		1 & u \end{bmatrix}^{n-1} 
	\begin{bmatrix} 0\\ 
		1
	\end{bmatrix}, \quad \text{ and } \begin{bmatrix}
		e(n)\\
		e(n-1)
	\end{bmatrix} 
	= \begin{bmatrix}
		u & v\\
		1 & 0
	\end{bmatrix}^{n-1}
	\begin{bmatrix} 
		1 \\
		0
	\end{bmatrix}.
	\en
	
	{The following Proposition shows that $c(n), d(n), e(n)$ and $|L_n|$ all have the same exponential growth rate.}
	\begin{proposition}
	{	The matrices $R(k,r)$ and 
		\be
		\begin{gathered}
		M_1=\begin{bmatrix} 0 & v \\
			1 & u \end{bmatrix} = \begin{bmatrix} 0 & r(k-r)\\1 & k-r \end{bmatrix},
		M_2=\begin{bmatrix}
			u & v\\
			1 & 0
		\end{bmatrix} = \begin{bmatrix} k-r & r(k-r)\\1 & 0 \end{bmatrix}, \text{ and }\\
	M_3=\begin{bmatrix} u & u \\
		r & 0\end{bmatrix} = \begin{bmatrix} k-r & k-r\\r & 0 \end{bmatrix}
	\end{gathered}
		\en
	 have the same maximal eigenvalue.}
	\end{proposition}
	\begin{proof}
		$M_1,M_2,M_3$ all have the same characteristic polynomial $-x(u-x)-v$ and
	eigenvalues 
		\be
		\beta=\frac{1}{2}(u+\sqrt {u^2+4v}), \quad \alpha=\frac{1}{2}(u-\sqrt {u^2+4v}).
		\en
		
		{To see that $R(k,r)$ also has the same maximal eigenvalue, note that the
		 shifts of finite type determined by the matrices $M_3$ and $R(k,r)$ are topologically conjugate by state splitting or amalgamation \cite{LM}*{Theorem 2.4.10, p. 54} and hence have the same maximal eigenvalue.}
	 
		Alternatively, 
			for each $n \geq 0$ we can count on $L_n$ the number $u_n$ of ``free" vertices $g_{i_1}\dots g_{i_n}$ with $i_n \in \{1, \dots, k-r_k\}$ and the remaining number $v_n$ of ``restricted" vertices. Then
		\be
		\begin{bmatrix}
			u_0\\
			v_0
		\end{bmatrix}
		= 	\begin{bmatrix}
			1\\
			0
		\end{bmatrix}, \quad
		\begin{bmatrix}
			u_{n+1}\\
			v_{n+1}
		\end{bmatrix} =
		\begin{bmatrix}
			k-r_k & k-r_k\\
			r_k  &  0
		\end{bmatrix}
		\begin{bmatrix}
			u_{n}\\
			v_{n}
		\end{bmatrix} 
		\quad\text{ for } n \geq 0.
		\en
	The powers of these two matrices both count the number of vertices on the $n$'th row of $\Delta_n$, which has exponential growth rate given by the maximal eigenvalue. 
		\end{proof}
	
	Abbreviate $s=\sqrt{u^2+4uv}$. The recurrences in (\ref{eq:cderec}) have solutions
	\be\label{eq:sols}
	\begin{aligned}
		c(n)&=\frac{1}{2^{n+1}s} \left[ (u+s)(u-s)^n +(s-u)(u+s)^n\right],\\
		d(n)&=\frac{1}{2^ns}\left[ (u+s)^n - (u-s)^n \right],\\
		e(n)&= \frac{1}{s}\left[ (\frac{u+s}{2})^n-(\frac{u-s}{2})^n\right]=\frac{1}{s}(\beta^n-\alpha^n).
	\end{aligned}
	\en
	
	We also solve the recurrence for the number of vertices on row $n$ of the restricted tree: 
	\be\label{eq:L_n}
	\begin{aligned}
		|L_n|&=|R^{n-1}|=k|L_{n-1}|+v|L_{n-2}|, \quad L_0=1, L_1=k, L_2=ku+v, \dots \\
		|L_n|&=\frac{1}{2s}\left[ \beta^n (2k+s-u) -\alpha^n (2k-s-u)\right].
	\end{aligned}
	\en
	Thus
	\be
	|\Delta_n| \sim \frac{\beta}{\beta-1}|L_n| \sim \frac{\beta}{\beta-1} \frac{2k-u+s}{2s} \beta^n.
	\en
	
	\begin{theorem}\label{th:pressureexists}
		For a generalized Fibonacci tree, {assume that $\log g(n)$ is bounded (which is the case on the full $k$-tree, or if the matrix $A \in P(k,p)$ for some $p \geq 1$, or if $g(\alpha,\beta)>0$ on the closure of the orbit of $(\rho_1,\rho_0)$ under $T$)}. Then
	the limit below exists, and {the pressure on the restricted tree is}
		\be\label{eq:series1}
	P^{(k)}=	\lim_{n \to \infty} \frac{\log |Z_n|}{|\Delta_n|} =  \frac{\beta -1}{\beta(2k+s-u)} \left[ (s-u) \log |Z_0| + {2}\log |Z_1| +
		{2}\sum_{i=1}^\infty \frac{\log g(i)}{\beta^i}\right].
		\en
	\end{theorem}
	\begin{proof}
		Looking at (\ref{eq:Z_n}), we have
		\be
		\frac{\log |Z_n|}{|\Delta_n|}=\frac{c(n)}{|\Delta_n|} + \frac{d(n)}{|\Delta_n|} +\frac{1}{|\Delta_n|} \sum_{i=1}^{n-1} e(n-i) \log g(i).
		\en
		When in the first two terms we expand $c(n)$ and $d(n)$ using (\ref{eq:sols}), as $n \to \infty$ we can ignore the terms involving $(u-s)^n$, since $s+u>s-u>0$. 
		
		For the third term, 
		\be
		\frac{1}{|\Delta_n|} \sum_{i=1}^{n-1} e(n-i) \log g(i) = 
		\frac{1}{s|\Delta_n|} \sum_{i=1}^{n-1} (\beta^{n-i} - \alpha^{n-i}) \log g(i),
		\en
		split the sum into two series, one involving powers of $\beta$ and one involving powers of $\alpha$. 
		If $|\alpha|<1$, then the series involving powers of $\alpha$ converges, and when we divide by $|\Delta_n|$ and let $n \to \infty$ this part contributes nothing to the limit.
		
		If $|\alpha|\geq 1$, the sum in the third term involving powers of $\alpha$ is bounded by
		\be
		|\alpha|^n\sum_{i=1}^{n-1} \frac{\log g(i)}{\alpha^{i}} \leq |\alpha^n| (n-1) ||g||_\infty,
		\en
		so when we divide by $|\Delta_n|$, which is of order $\beta^n\gg\alpha^n$, again this contribution vanishes.
		
		Since {$\log g(i)$ is bounded and} as $n \to \infty$
		\be
		\begin{aligned}
			\frac{c(n)}{|\Delta_n|} &\sim \frac{\beta-1}{\beta}\frac{s-u}{2k-u+s},\\
			\frac{d(n)}{|\Delta_n|} &\sim 2 \frac{\beta-1}{\beta}\frac{s-u}{2k-u+s}, \quad\text{and}\\
			\frac{e(n)}{|\Delta_n|} &\sim 2 \frac{\beta-1}{\beta}\frac{s-u}{2k-u+s},
		\end{aligned}
		\en
		the conclusion follows.
		\end{proof}

	We specialize to the Fibonacci tree, when $k=2$ and $r=1$. Denote by $\lambda$ the golden mean, which is the maximal eigenvalue of the restriction matrix $R(2,1)$.
	\begin{corollary}
		For the Fibonacci tree and matrix $A \in P(k,p)$ for some $p \geq 1$, 
	the limit below exists, and
	\be\label{eq:series2}
	\lim_{n \to \infty} \frac{\log |Z_n|}{|\Delta_n|} =  \frac{1}{\lambda^5} \log |Z_0| + \frac{1}{\lambda^4} \log |Z_1| +
	\frac{1}{\lambda^4}\sum_{i=1}^{\infty} \frac{\log g(i)}{\lambda^i}.
	\en
	\end{corollary}

	\section{Asymptotic pressure on restricted trees}\label{sec:aspres}
		Suppose that for each dimension $k \geq 2$ we have a primitive $k \times k$ $0,1$ matrix $R_k$ that defines a subtree $S(k,R_k)$ of the full rooted $k$-tree as above (see \ref{eq:subtree}). 
	We denote by $\Delta_n^{(k)}$ the subtree of $S(k,R_k)$ consisting of all vertices of height no more than $n$ and by $\lambda_k$ the maximal (Perron-Frobenius) eigenvalue of $R_k$.

	\begin{lemma}\label{lem:sizes}
		For each $k>1$ the limit as $n \to \infty$ of the
		ratio of the size of the last row of $\Delta_n^{(k)}$ to the number of vertices in $\Delta_n^{(k)}$  is
		\be
	\lim_{n \to \infty} \frac{|L_n^{(k)}|}{|\Delta_n^{(k)}|} =	\lim_{n \to \infty} \frac{|R_k^{n-1}|}{1 + \sum_{m=1}^n|R_k^{m-1}|}=\frac{\lambda_k-1}{\lambda_k}.
		\en
		\end{lemma}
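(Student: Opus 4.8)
The plan is to reduce the statement to the asymptotic growth rate of the matrix powers $|R_k^n|$, which primitivity controls through the Perron--Frobenius theorem, and then to substitute directly.

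First I would record that $\lambda_k>1$. A primitive $0,1$ matrix of size $k\ge 2$ has every row sum at least $1$ (a zero row stays zero in every power, contradicting primitivity), and if every row sum were exactly $1$ the matrix would be a permutation matrix, hence for $k\ge 2$ imprimitive; so some row sum is $\ge 2$ and $\lambda_k>1$. Next, applying the Perron--Frobenius theorem to the primitive matrix $R_k$, there are strictly positive left and right eigenvectors $u_k,v_k$ for $\lambda_k$, which I normalize so that $u_k^\top v_k=1$, and then $\lambda_k^{-n}R_k^n\to v_k u_k^\top$ entrywise as $n\to\infty$. Summing all entries gives $\lambda_k^{-n}|R_k^n|\to c_k:=\big(\sum_i (v_k)_i\big)\big(\sum_j (u_k)_j\big)>0$, i.e. $|R_k^n|=c_k\lambda_k^n\big(1+o(1)\big)$.

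Now I would plug this into the two quantities in the statement. For the denominator, $|\Delta_n^{(k)}|=1+\sum_{m=1}^n|R_k^{m-1}|=1+\sum_{j=0}^{n-1}|R_k^j|$; since $|R_k^j|=c_k\lambda_k^j(1+o(1))$ with $\lambda_k>1$, the last terms dominate and $\sum_{j=0}^{n-1}|R_k^j|\sim c_k\sum_{j=0}^{n-1}\lambda_k^j=c_k\frac{\lambda_k^n-1}{\lambda_k-1}\sim\frac{c_k}{\lambda_k-1}\lambda_k^n$, the additive constant $1$ being negligible, so $|\Delta_n^{(k)}|\sim\frac{c_k}{\lambda_k-1}\lambda_k^n$. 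For the numerator, $|L_n^{(k)}|=|R_k^{n-1}|\sim c_k\lambda_k^{n-1}=\frac{c_k}{\lambda_k}\lambda_k^n$. Dividing, the positive constant $c_k$ and the factor $\lambda_k^n$ cancel, leaving $\frac{\lambda_k-1}{\lambda_k}$, as claimed.

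The only point needing a little care --- the main ``obstacle,'' though it is routine --- is verifying $\sum_{j=0}^{n-1}|R_k^j|\sim\frac{c_k}{\lambda_k-1}\lambda_k^n$, i.e. that the $o(1)$ errors do not accumulate. Writing $|R_k^j|=c_k\lambda_k^j+\varepsilon_j$ with $\varepsilon_j/\lambda_k^j\to 0$, given $\delta>0$ choose $N$ with $|\varepsilon_j|\le\delta\lambda_k^j$ for $j\ge N$; then $\sum_{j=0}^{n-1}|\varepsilon_j|\le\big(\sum_{j<N}|\varepsilon_j|\big)+\delta\sum_{j=N}^{n-1}\lambda_k^j=O(1)+O(\delta\lambda_k^n)$, so $\sum_{j=0}^{n-1}|\varepsilon_j|=o(\lambda_k^n)$ since $\delta$ is arbitrary. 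Everything else is direct substitution. (One could instead avoid the eigenprojection convergence and argue via Stolz--Ces\`aro from $|R_k^n|/|R_k^{n-1}|\to\lambda_k$, but invoking Perron--Frobenius in the form above is cleanest.)
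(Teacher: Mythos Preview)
Your proof is correct and follows essentially the same route as the paper: invoke Perron--Frobenius to get $|R_k^n|\sim c_k\lambda_k^n$, substitute into the ratio, and then check via an $\varepsilon$--$N$ argument that the $o(1)$ errors in the denominator sum do not accumulate against $\lambda_k^n$. Your version is in fact slightly more complete, since you explicitly verify $\lambda_k>1$ (needed for the geometric-series step) and keep the Perron constant $c_k$ visible rather than tacitly normalizing it away.
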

	\begin{proof}
		By the Perron-Frobenius Theorem (see for example \cite{Seneta 1981}*{Theorem 1.2, p. 9} and \cite{LM}*{Theorem 4.5.12, p.130}), if a nonnegative primitive matrix $M$ has maximal eigenvalue $\lambda$ and corresponding left and right eigenvectors $l$ and $r$, normalized so that $l \cdot r=1$, then for each  
		$i,j=1,\dots,k$ and $n \geq 0$ there are $\epsilon_{ij}(n)$ which tend to $0$ as $n \to \infty$ such that
		\be
		(M^n){ij} = \lambda^n(r_i l_j + \epsilon_{ij}(n)).
		\en 
			Thus for fixed $k$ (suppressing the dependence of $\epsilon$ on $k$), 
			and letting $\epsilon (n) =\sum_{i,j} \epsilon_{ij}(n)$,
		\be
		 \frac{|R_k^{n-1}|}{1 + \sum_{m=1}^n|R_k^{m-1}|}=\frac{\lambda_k^{n-1}(1 + \epsilon(n-1))}{1+\sum_{m=1}^n 
		 	\lambda_k^{m-1}(1 + \epsilon(m-1))}.
	 	\en
	 	The terms involving the $\epsilon(n)$ and $\epsilon(m)$ can be ignored as $n \to \infty$, because
	 	\be
	 	\frac{1}{\lambda_k^{n-1}} \sum_{m=1}^n \lambda_k^{m-1} \epsilon (m-1) \to 0 \quad\text{ as } n \to \infty.
	 		\en
	 	(To see this, given $\delta>0$, choose $n_0$ so that $n \geq n_0$ implies that $\epsilon(n)<\delta$. 
	 	Then choose $n_1 \geq n_0$ so that for $n \geq n_1$,
	 	\be
	 	\frac{1+\epsilon(1) \lambda_k +\epsilon(2) \lambda_k^2 + \dots + \epsilon(n_0-1) \lambda_k^{n_0-1}}{\lambda_k^{n-1}} < \delta.)
	 	\en
	 	This shows that for $n \geq n_1$,
	 	\be
	 	{ \frac{|R_k^{n-1}|}{1 + \sum_{m=1}^n|R_k^{m-1}|} \approx \frac{\lambda_k^{n-1}}{1+\sum_{m=1}^n \lambda_k^{m-1}} \approx \frac{\lambda_k-1}{\lambda_k}.}
	 	 \en	 	
	\end{proof}
	
	Suppose now that we
	have also a $d \times d$ nonnegative pair interaction matrix $A=(a_{ij})$ and a $d$-dimensional positive vector $w$.
	Recall that then the interaction matrix is $E(i,j)=a_{ij}w_j$.
	\begin{theorem}\label{thm:limit}
		Denote by $X_A^{(k)}$ the tree SFT on the subtree $S(k,r)$ determined by the primitive restriction matrix $R_k$ and nondegenerate pair interaction matrix $A$; by $|Z_n^{(k)}|$ its partition function on $\Delta_n^{(k)}$; by $P_n^{(k)}$ the pressure or free energy on $\Delta_n^{(k)}$; {and by $\bar P^{(k)}=\limsup_{n \to \infty} P_n^{(k)}$ and $\underline P^{(k)}=\liminf_{n \to \infty} P_n^{(k)}$ the upper and lower limiting pressures.}
		Denote the maximal eigenvalue of $R_k$ by $\lambda_k$ and assume that 
		\be
		\lim_{k \to \infty} \lambda_k = \infty.
		\en
		Denote by $s$ the maximum row sum of the interaction matrix $E$:
		\be
		s=\max\{\sum_{j=1}^d E(i,j):i=1,\dots,d\}.
		\en
		Then the {\em asymptotic pressure} of the sequence of restricted tree shifts (all with the same interaction matrix) as the dimension tends to infinity is
		\be
	P^{(\infty)}=	\lim_{k \to \infty} \bar P^{(k)} = \lim_{k \to \infty} \underline P^{(k)}=  \log s.
		\en
	\end{theorem}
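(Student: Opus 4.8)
The plan is to prove the matching estimates $\limsup_{k\to\infty}P^{(k)}\le\log s$ and $\liminf_{k\to\infty}P^{(k)}\ge\log s$. Write $\sigma=s$ for the maximal row sum of $E$ and $R_i=\sum_j E(i,j)$; if $\sigma=0$ then $E\equiv 0$, so $Z_n^{(k)}=0$ for $n\ge 1$ and both sides equal $-\infty$, and we may assume $\sigma>0$. For the upper bound I would exploit the recursion inherent in a tree. For a vertex $v$ of $\Delta_n^{(k)}$ at height $m$, say of \emph{type} $t$ when the word representing $v$ ends in $g_t$ (the root being its own type), let $u_m^{(t)}(i)$ be the sum, over all labelings of the subtree of $\Delta_n^{(k)}$ rooted at $v$ with $v$ carrying label $i$, of the product of $E$ over the edges of that subtree. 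This depends only on $m,t,i$; it equals $1$ at the leaf level $m=n$; it satisfies $u_m^{(t)}(i)=\prod_{s:\,(R_k)_{ts}=1}\bigl(\sum_j E(i,j)\,u_{m+1}^{(s)}(j)\bigr)$; and $Z_n^{(k)}=\sum_i w_i\,u_0^{(0)}(i)$ with $u_0^{(0)}(i)=\prod_{s=1}^k\bigl(\sum_j E(i,j)\,u_1^{(s)}(j)\bigr)$. An easy downward induction using $\sum_j E(i,j)\le\sigma$ gives $u_m^{(t)}(i)\le\sigma^{\,\tau_m^{(t)}-1}$, where $\tau_m^{(t)}$ is the number of vertices of the subtree rooted at $v$; since $\sum_{s=1}^k\tau_1^{(s)}=|\Delta_n^{(k)}|-1$ this yields $Z_n^{(k)}\le\bigl(\sum_i w_i\bigr)\sigma^{\,|\Delta_n^{(k)}|-1}$, and dividing by $|\Delta_n^{(k)}|$ and letting $n\to\infty$ gives $P^{(k)}\le\log\sigma$ for every $k$. (This step uses neither primitivity nor $\lambda_k\to\infty$.)

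For the lower bound I would build, for each $n$, a subfamily of admissible configurations on $\Delta_n^{(k)}$ concentrated near the leaves; this is efficient because by Lemma~\ref{lem:sizes} the layer $L_n^{(k)}$ already carries a fraction tending to $(\lambda_k-1)/\lambda_k$ of all vertices, which is close to $1$ once $\lambda_k$ is large. Fix $i^*$ with $R_{i^*}=\sigma$. Since $E$ is primitive (equivalently $A$ is, as $w>0$), pick $N$ with $E^N>0$; this provides a closed walk $i^*=\ell_0,\ell_1,\dots,\ell_N=i^*$ with $\gamma:=\min_{0\le r<N}E(\ell_r,\ell_{r+1})>0$, and $N,\gamma$ are independent of $k$. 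Label every vertex of $\Delta_{n-1}^{(k)}$ at height $m$ by $\ell_{(m-(n-1))\bmod N}$, so that all of $L_{n-1}^{(k)}$ receives the label $i^*$; this is $A$-admissible and every edge inside $\Delta_{n-1}^{(k)}$ carries weight at least $\gamma$. Now let each leaf of $L_n^{(k)}$ independently range over the labels $j$ with $E(i^*,j)>0$: each such leaf contributes $\sum_j E(i^*,j)=\sigma$, and since $E$ has no zero row every configuration so obtained extends to an element of $X_A^{(k)}$. Summing over this subfamily gives $Z_n^{(k)}\ge(\min_i w_i)\,\gamma^{\,|\Delta_{n-1}^{(k)}|-1}\,\sigma^{\,|L_n^{(k)}|}$, hence
\[
P_n^{(k)}\ \ge\ \frac{\log\min_i w_i}{|\Delta_n^{(k)}|}+\frac{|\Delta_{n-1}^{(k)}|-1}{|\Delta_n^{(k)}|}\,\log\gamma+\frac{|L_n^{(k)}|}{|\Delta_n^{(k)}|}\,\log\sigma .
\]
Letting $n\to\infty$, using Lemma~\ref{lem:sizes} together with $|\Delta_{n-1}^{(k)}|/|\Delta_n^{(k)}|=1-|L_n^{(k)}|/|\Delta_n^{(k)}|\to 1/\lambda_k$, yields $P^{(k)}\ge\frac1{\lambda_k}\log\gamma+\frac{\lambda_k-1}{\lambda_k}\log\sigma$.

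Combining the two bounds finishes it: since $\gamma$ and $\sigma$ are fixed and $\lambda_k\to\infty$, the lower estimate tends to $\log\sigma$, so $\liminf_{k\to\infty}P^{(k)}\ge\log\sigma$, and with $P^{(k)}\le\log\sigma$ this forces $\lim_{k\to\infty}P^{(k)}=\log\sigma=\log s$. I expect the lower bound to be the main point. Its two delicate features are that one must force the leaf parents to carry the \emph{specific} label $i^*$, so that each leaf contributes the maximal row sum $\sigma$ rather than the row sum of some incidental label; and that, because $E(i^*,i^*)$ may vanish, propagating $i^*$ upward through $\Delta_{n-1}^{(k)}$ requires the primitivity of $E$ (through the closed walk $\ell_0,\dots,\ell_N$). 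Everything else is the tree geometry already packaged in Lemma~\ref{lem:sizes}: the ``interior'' cost $\gamma^{\,|\Delta_{n-1}^{(k)}|}$ becomes negligible precisely because $|\Delta_{n-1}^{(k)}|/|\Delta_n^{(k)}|\to 1/\lambda_k\to 0$.
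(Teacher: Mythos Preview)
Your proof is correct and follows essentially the same approach as the paper's: bound $Z_n^{(k)}$ above by $|w|\,s^{|\Delta_n^{(k)}|-1}$, and below by forcing the maximizing label $i^*$ onto all of $L_{n-1}^{(k)}$, letting $L_n^{(k)}$ vary freely to contribute $s^{|L_n^{(k)}|}$, and then invoking Lemma~\ref{lem:sizes}. You are in fact more careful than the paper in two places---you keep track of the interior factor $\gamma^{|\Delta_{n-1}^{(k)}|-1}$ (which the paper silently drops) and you introduce primitivity of $E$ to manufacture a closed walk back to $i^*$, whereas the paper simply picks ``any allowed predecessor'' level by level; both arguments therefore rest on the same implicit assumption that the labels used have predecessors in $A$.
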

\begin{proof}
	The argument at the end of \cite{PS2018} adapts to this generalized context. 
	Fix $k \geq 2$.
	When considering the possible values of $|Z_n^{(k)}|$ due to the various configurations $x \in X_A$, we see that the root $\epsilon$ can be assigned any of the values $w_1,\dots ,w_d$, and the edges entering any row with the same label $i$ at their initial vertices can independently be assigned  values $E(i,j)$ ($j=1, \dots, d$). 
	Each of these products is bounded by a power of the maximal row sum. 
	Thus
	\be
	|Z_n^{(k)}| \leq |w| s^{|L_1|} s^{|L_2|} \dots s^{|L_n|},
	\en
	so that 
	\be
	\frac{\log |Z_n^{(k)}|}{|\Delta_n|} \leq \frac{\log |w|+ \sum_{m=1}^n |L_m| \log s}{1+\sum_{m=1}^n |L_m|}\to \log s 
	\quad\text{ as } n \to \infty,
	\en
{and hence 
	\be
\bar P^{(k)} \leq \log s.
	\en }
	For the lower estimate, let $i \in \{1,\dots ,d\}$ be an index for which
	\be
	\sum_{j=1}^d E(i,j)=s,
	\en
	and consider configurations $x \in X_A$ for which $x(g)=i$ for all $g \in L_{n-1}$.
	
	We complete each configuration $x$ by next working up from row $L_{n-1}$ to the root. 
	On row $n-2$ assign at every vertex any allowed predecessor $j$ of $i$ (i.e., $a_{ji}>0$), 
	then on row $n-3$ assign at each vertex any allowed predecessor of $j$, etc.
	
	Finally, on row $n$ assign independently to each vertex any of the allowed successors of $i$. 
	This way we produce a large set $\mathcal C$ of allowed configurations on $\Delta_n$, each of which extends to a legal $x \in X_A$. 
	
{Let 
	\be
	b=\inf \{E(i,j): E(i,j) >0\} 
	\en
	and, for each $i,j$,
	\be
	\tilde E(i,j)=E(i,j)/b, \quad\text{ so that all } \tilde E(i,j)\geq 1.
	\en
	Let $\tilde s=s/b$ denote the maximum row sum of the matrix $\tilde E$	
	and $w_*$ the minimum entry of $w$. 
	Then
	\be
	\begin{aligned}
|Z_n^{(k)}| &\geq \sum_{x \in \mathcal C} w(x(\epsilon)) \prod_{\gamma=\left< g,h\right> \text{ edge in }\Delta_n} E(x(g),x(h))\\
	&= \sum_{x \in \mathcal C} w(x(\epsilon)) \prod_{\gamma=\left< g,h\right> \text{ edge in }\Delta_n} b \tilde E(x(g),x(h))\\
	&{\geq w_* b^{|\Delta_n|-1} \sum_{j_1,\dots ,j_{|L_n|}} \tilde E(i,j_1) \tilde E(i,j_2) \cdots \tilde E(i,j_{|L_n|})}\\
		&= w_* b^{|\Delta_n|-1} [\tilde E(i,1) + \tilde E(i,2) + \dots + \tilde E(i,d)]^{|L_n|}\\
			&= w_* b^{|\Delta_n|-1} \tilde s^{|L_n|}=w_* b^{|\Delta_n|-1-|L_n|} s^{|L_n|}.
	\end{aligned}
	\en }
{Therefore
	\be
	\frac{\log |Z_n^{(k)}|}{|\Delta_n|} \geq \frac{\log w_* +(|\Delta_n|-1-|L_n)\log b + |L_n| \log s}{|\Delta_n|},
	\en
	and	
	\be
	\begin{gathered}
	\lim_{n \to \infty} \frac{\log w_* +(|\Delta_n|-1-|L_n)\log b + |L_n| \log s}{|\Delta_n|}=
\frac{\log b}{\lambda_k} +	\frac{\lambda_k -1}{\lambda_k} \log s \\
\leq \liminf_{n \to \infty} \frac{\log |Z_n^{(k)}|}{|\Delta_n|}
	=\underline P^{(k)}\leq \bar P^{(k)} \leq \log s.
	\end{gathered}
		\en	
	}
{The conclusion follows by letting $k \to \infty$.}
\end{proof}
		
	\begin{corollary}\label{cor:irred}
		The previous theorem holds as well if the hypothesis that the restriction matrices $R_k$ be primitive is relaxed to require only that they be irreducible.
	\end{corollary}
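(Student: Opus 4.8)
The plan is to isolate the one place in the proof of Theorem~\ref{thm:limit} that used primitivity of the $R_k$---namely Lemma~\ref{lem:sizes}, and only the lower estimate for $P^{(k)}$ that it feeds---and to replace that lemma by a one-sided estimate that is weaker but still suffices once one lets $k\to\infty$. The upper half of the argument needs nothing new: the inequality $Z_n^{(k)}\le|w|\,s^{|L_1^{(k)}|}\cdots s^{|L_n^{(k)}|}$ is purely combinatorial, and irreducibility of $R_k$ (with $k\ge 2$) already forces $|L_m^{(k)}|=|R_k^{m-1}|\ge 1$ for every $m$, since a finite strongly connected digraph with an edge has walks of every length; hence $\sum_{m=1}^n|L_m^{(k)}|=|\Delta_n^{(k)}|-1\to\infty$ and $P_n^{(k)}\le\log s+(\log|w|-\log s)/|\Delta_n^{(k)}|\to\log s$, so $P^{(k)}=\lim_n P_n^{(k)}\le\log s$ for every $k$. (Neither this, nor the fact that $P^{(k)}$ exists as a limit, uses primitivity.)

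For the lower half the difficulty is that when the period $p_k$ of $R_k$ exceeds $1$ the ratio $|L_n^{(k)}|/|\Delta_n^{(k)}|$ need no longer converge, so Lemma~\ref{lem:sizes} is false as stated. The substitute I would establish is
\be
\limsup_{n\to\infty}\frac{|L_n^{(k)}|}{|\Delta_n^{(k)}|}\ \ge\ \frac{\lambda_k-1}{\lambda_k}.
\en
By the Perron--Frobenius theorem in its form for irreducible, possibly imprimitive, matrices---equivalently, by applying the primitive case to each of the $p_k$ primitive diagonal blocks of $R_k^{p_k}$, all of spectral radius $\lambda_k^{p_k}$---one has $|R_k^{m-1}|=\lambda_k^{\,m-1}(c_m+o(1))$ with $(c_m)$ nonnegative and $p_k$-periodic, and its mean $\tfrac1{p_k}\sum_{m=1}^{p_k}c_m$ equals $(\mathbf 1\cdot r)(l\cdot\mathbf 1)>0$ (Perron eigenvectors), so $c^{*}:=\max_m c_m>0$. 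Letting $n\to\infty$ through the residue class on which $c_m=c^{*}$ and majorizing the denominator by a geometric series with ratio $\lambda_k$ and leading coefficient $c^{*}$, the ratio tends, along this class, to a limit $\rho_k\ge(\lambda_k-1)/\lambda_k$; moreover $\rho_k<1$, since $|\Delta_{n-1}^{(k)}|/|\Delta_n^{(k)}|=1-|L_n^{(k)}|/|\Delta_n^{(k)}|$ stays bounded away from $0$. In particular $\rho_k\to1$ as $k\to\infty$.

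It remains to rerun the lower-bound half of the proof of Theorem~\ref{thm:limit} with $\rho_k$ replacing $(\lambda_k-1)/\lambda_k$. The construction there produces, with $\beta:=\min\bigl(1,\ \min\{E(i,j):E(i,j)>0\}\bigr)\in(0,1]$ depending only on $E$, the estimate $Z_n^{(k)}\ge(\min_i w_i)\,\beta^{|\Delta_{n-1}^{(k)}|}\,s^{|L_n^{(k)}|}$; hence, using $|\Delta_{n-1}^{(k)}|/|\Delta_n^{(k)}|=1-|L_n^{(k)}|/|\Delta_n^{(k)}|$,
\be
P_n^{(k)}\ \ge\ o(1)+\Bigl(1-\frac{|L_n^{(k)}|}{|\Delta_n^{(k)}|}\Bigr)\log\beta+\frac{|L_n^{(k)}|}{|\Delta_n^{(k)}|}\,\log s.
\en
Letting $n\to\infty$ along that same residue class, and using $P^{(k)}=\lim_n P_n^{(k)}=\limsup_n P_n^{(k)}$, one gets $P^{(k)}\ge(1-\rho_k)\log\beta+\rho_k\log s$. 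Since $\beta$ and $s$ are fixed while $\rho_k\to1$, the right-hand side tends to $\log s$, so $\liminf_k P^{(k)}\ge\log s$; combined with $P^{(k)}\le\log s$ and $\lambda_k\to\infty$ this gives $\lim_k P^{(k)}=\log s$. The earlier assertions of Theorem~\ref{thm:limit} that Corollary~\ref{cor:irred} also claims (for instance that $P^{(k)}$ exists) carry over because their proofs do not use primitivity either.

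The step I expect to be the main obstacle is precisely the replacement of Lemma~\ref{lem:sizes}: in the imprimitive case it has no counterpart as an honest limit, and the one-sided bound above---together with the need to retain the $\beta$-penalty, harmless only because the chosen subsequence pins $|\Delta_{n-1}^{(k)}|/|\Delta_n^{(k)}|$ near $1/\lambda_k$---is exactly what forces one to argue along a carefully chosen subsequence of rows and to invoke the existence of $P^{(k)}$ as a genuine limit rather than working with $\liminf$s throughout. Everything else is a transcription of the arguments already given, read now with ``irreducible'' in place of ``primitive''.
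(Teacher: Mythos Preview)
Your argument is correct and, in one respect, more careful than the paper's. The paper's proof simply records the refined Perron--Frobenius asymptotics for an irreducible matrix---that $(R_k^{np+t(i,j)})_{ij}=(r_il_j+\epsilon_{ij}(n))\lambda_k^{np+t(i,j)}$ along the appropriate residue class for each pair $(i,j)$---and then asserts that ``the proof of Lemma~\ref{lem:sizes} goes through as before,'' after which Theorem~\ref{thm:limit} is taken to follow unchanged. You instead notice, rightly, that when the period $p_k$ exceeds $1$ the coefficients $c_m$ in $|R_k^{m-1}|\sim c_m\lambda_k^{m-1}$ are only $p_k$-periodic and need not be constant in $m$, so the ratio $|L_n^{(k)}|/|\Delta_n^{(k)}|$ can oscillate rather than converge (a small bipartite example already shows this). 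Your remedy---selecting the residue class on which $c_m$ is maximal to secure $\limsup_n|L_n^{(k)}|/|\Delta_n^{(k)}|\ge(\lambda_k-1)/\lambda_k$, and then exploiting the existence of $P^{(k)}$ as an honest limit to evaluate it along that subsequence---is exactly the repair the situation calls for. You also make explicit the $\beta$-penalty from the edges inside $\Delta_{n-1}^{(k)}$, which the paper's lower bound $Z_n\ge s^{|L_n|}\min_i w_i$ silently omits; since $\log\beta\le 0$ appears with coefficient $1-\rho_k\to 0$, this is harmless as $k\to\infty$, and your bookkeeping handles it cleanly. In short: the same Perron--Frobenius input as the paper, but where the paper claims Lemma~\ref{lem:sizes} verbatim, you substitute the one-sided subsequence bound that the proof of Theorem~\ref{thm:limit} actually needs.
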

\begin{proof}
	An irreducible nonnegative matrix $M$ has an associated strongly connected directed graph $G$ whose vertices are the indices of $M$. They share  a minimal period $p$.
	$M$ can be replaced by its canonical form produced by grouping indices into classes that are permuted cyclically 
	when following the walk that $M$, regarded as an adjacency matrix, defines on $G$.
	Then $M^p$ has square primitive matrices on its diagonal and $0$ entries elsewhere.	
	See \cite{LM}*{Sections 2.2 and 4.5} and \cite{Seneta1981}*{Section 1.3}.
	We apply this to the matrices $R_k$.
	
	As before denote by $\lambda_k$ the maximal eigenvalue of the irreducible nonnegative matrix $R_k$ and by $r,l$ the associated right and left positive eigenvectors. 
	To simplify the notation we suppress most of the dependence on $k$. 
	By \cite{Seneta1981}*{Theorems 1.3 and 1.4, pp. 18 and 21} and \cite{LM}*{Exercise 4.5.14, p. 134},
	for each $i,j=1,\dots,d$ there is a unique integer $t(i,j) \in [0,p-1]$ such that 
	\be
	(R^m)_{ij}>0 \quad\text{ implies } m \equiv t(i,j) \mod p, \text{ and, for all large enough } n,
	\en
	\be
     0 < (R^{np+t(i,j)})_{ij}=(r_il_j + \epsilon_{ij}(n)) \lambda_k^{np+t(i,j)},
	\en
	where each $\epsilon_{ij}(n)\to 0$ as $n \to \infty$.
	Thus the proof of Lemma \ref{lem:sizes} goes through as before.
	\end{proof}

\begin{example}\label{ex:GGM}
	Let us see how this works for the generalized Fibonacci trees defined by the restriction matrices $R(k,r_k)$ of Example \ref{ex:restriction matrices} (2).
	For each $k \geq 1$ and $r_k \in [0,k-1]$, assuming that $\log g$ is bounded, denote by $P^{(k)}$ the pressure on the (possibly) restricted tree shift determined by the tree restriction matrix $R_k=R(k,r_k)$ and the labeling matrix $A$ (the limit is known to exist by Theorem \ref{th:pressureexists}). 
	If $r_k=0$, we have the full $k$-tree.\\
	\indent
$R(k,r_k)$
has maximal eigenvalue $\lambda(k,r_k)=(k-r_k+\sqrt{(k-r_k)(k+3r_k)})/2$ (equal to the maximal eigenvalue of $M_3$, above). 
Each $\lambda(k,r) \to \infty$, in fact uniformly in $\{ (k,r_k): r_k \in [0,k-1]\}$. \\
\indent
Thus for such a sequence of (possibly) restricted trees, with fixed pair interaction matrix $A$, fixed site energy vector $w$ for labeling sites, and fixed interaction matrix $E(i,j)=a_{ij}w_j$ with maximal row sum $s$, we have that the asymptotic pressure is 
\be
\lim_{k \to \infty} P^{(k)} = \log s, \quad\text{ uniformly in } \{ (k,r_k): r_k \in [0,k-1]\}.
\en
\end{example}

	 \begin{bibdiv}
	 	\begin{biblist}
	 		\bibselect{TreeShifts}
	 	\end{biblist}
	 \end{bibdiv}
 \end{document}